\newcommand{\rot}{\operatorname{rot}}
\newtheorem{Thm}{Theorem}
\newtheorem{Rem}[Thm]{Remark}
\newtheorem{Lem}[Thm]{Lemma}
\newcommand {\p}{\partial}
\newcommand{\eq}{\begin{equation}}
\newcommand{\eeq}{\end{equation}}
\def\div{\text{\rm div\,}}
\def\O{\Omega}
\def\p{\partial}
\def\A{\bold A}
\def\B{\bold B}
\def\D{\bold D}
\def\E{\bold E}
\def\f{\bold f}
\def\F{\bold F}
\def\H{\bold H}
\def\n{\bold n}
\def\R{\Bbb R}
\def\u{\bold u}
\def\bv{\bold v}
\def\w{\bold w}
\def\0{\bold 0}
\numberwithin{equation}{section}
\numberwithin{Thm}{section}
\begin{document}

\large

\author{Zhibing Zhang}
\author{Chunyi Zhao}

\address{Zhibing Zhang: School of Mathematics and Physics, Anhui University of Technology, Ma'anshan 243032, People's Republic of China; }
\email{zhibingzhang29@126.com}

\address{Chunyi Zhao: School of Mathematical Sciences, Shanghai Key Laboratory of PMMP, East China Normal University, Shanghai 200241, People's Republic of China.}

\email{cyzhao@math.ecnu.edu.cn}

\thanks{}

\title[steady MHD equations]
{Solvability of an inhomogeneous boundary value problem for steady MHD equations}

\keywords{MHD equations, Solvability, Inhomogeneous}

\subjclass[2010]{35J60; 35Q35; 35Q60}

\begin{abstract}
In this paper, we consider the steady MHD equations with inhomogeneous boundary conditions for the velocity and the tangential component of the magnetic field. Using a new construction of the magnetic lifting, we obtain existence of weak solutions under sharp assumption on boundary data for the magnetic field.
\end{abstract}

\maketitle

\section{Introduction}

Let $\O$ be a bounded domain in $\mathbb{R}^3$ with $C^{1,1}$ boundary $\p\O$. The boundary $\p\O$ has a finite number of connected components $\Gamma_0,\Gamma_1,\cdots,\Gamma_m$, where $\Gamma_0$ denotes the boundary of the infinite connected component of $\mathbb{R}^3\backslash\overline{\O}$. The domain $\O$ can be multiply connected. Assume that there are disjoint $C^2$ cuts $\Sigma_1,\cdots, \Sigma_N$ such that $\O\backslash\cup_{i=1}^N \Sigma_i$ is simply connected and Lipschitz. We denote by $\mathbf{n}$ the unit outer normal on $\p\O$. We consider the following steady MHD equations
\begin{equation}\label{MHD1}
\begin{cases}
-\nu\Delta \u+(\u\cdot\nabla)\u+\nabla p-\varkappa\rot\H\times\H={\bf f,}\quad \div\u=0\quad&\text{in } \O,\\
\nu_1\rot\H-\E+\varkappa\H\times\u=\nu_1\mathbf{j,}\quad \div\H=0,\quad \rot\E=\0\quad & \text{in } \O,
\end{cases}
\end{equation}
with inhomogeneous boundary conditions
\begin{equation}\label{MHD-bc}
  \u=\mathbf{g},\quad\H\times\mathbf{n}=\mathbf{q}\qquad \text{on }\p\O.
\end{equation}
Here $\u$ is the velocity vector, $\H$ is the magnetic field, $\E=\E'/\rho_0$, $p=P/\rho_0$, where $\E'$ is the electric field, $P$ is the pressure, $\rho_0=\text{const}>0$ is the fluid density. Moreover, $\nu$ is the kinematic viscosity, $\varkappa=\mu/\rho_0$, $\nu_1=1/(\rho_0\sigma)$, where $\mu,\sigma$ are the magnetic permeability and the electric conductivity. Besides, $\f$ and $\mathbf{j}$ are given functions defined on $\O$, and $\mathbf{g}$ and $\mathbf{q}$ are given functions defined on $\p\O$.

Beginning from the pioneering works \cite{LS1960,S1960}, the solvability of boundary value problems for the MHD equations has been studied in many works, see, for example, \cite{Alekseev04,Alekseev,Alekseev16,GMP1991,ST1983} and the references therein. Precisely, Solonnikov \cite{S1960} proved the solvability of
the steady MHD equations (\ref{MHD1}) with the homogeneous boundary conditions
$$\H\cdot\mathbf{n}=0,\quad \E\times\mathbf{n}=\0\quad\text{ on }\p\O.$$
Later, Gunzburger, Meir and Peterson \cite{GMP1991} considered the corresponding inhomogeneous boundary value problem with
\begin{equation}\label{bc}
\u=\mathbf{g},\quad\H\cdot\mathbf{n}=q,\quad \E\times\mathbf{n}=\mathbf{k}\quad\text{ on }\p\O,
\end{equation}
and obtained the local (i.e., in the case of small boundary data) solvability. The global (i.e., without small boundary data condition) solvability of (\ref{MHD1}) with the boundary conditions \eqref{bc} was firstly proved by Alekseev \cite{Alekseev04,Alekseev} under the assumption that the boundary data $\mathbf{g}$ is tangential to the boundary. As for the problem \eqref{MHD1}-(\ref{MHD-bc}), Gunzburger, Meir and Peterson \cite{GMP1991} claimed that its local solvability can be similarly proved as (\ref{MHD1})-\eqref{bc}. In 2016, Alekseev \cite{Alekseev16} obtained the global solvability of \eqref{MHD1}-(\ref{MHD-bc}) under the conditions that the boundary data $\mathbf{g}\in H^{1/2}(\p\O,\Bbb R^3)$, $\mathbf{q}\in L^2(\p\O,\Bbb R^3)$ are tangential to the boundary and $\mathbf{q}$ satisfies
\begin{equation}\label{cond-q}
\div_{\p\O}\;\mathbf{q}=0, \quad \int_{\p\O}\mathbf{q}\cdot \mathbf{h}~\mathrm dS=0, \quad\forall~\mathbf{h}\in \Bbb H_N(\O).
\end{equation}
Here $\div_{\p\O}$ is the operator of surface divergence, $\Bbb H_N(\O)$ represents the harmonic Neumann fields, that is
$$
\aligned
&\Bbb H_N(\O)=\{\u\in L^2(\O,\Bbb R^3): \rot\u=\0,\; \div\u=0\text{ in $\O$,}\; \u\cdot\mathbf{n}=0 \text{ on $\p\O$}\}.
\endaligned
$$
Very recently, Alekseev and Brizitskii \cite{AB20} generalized the global solvability result for $\mathbf{g}=\0$ and $\mathbf{q}\in H^s(\p\O,\Bbb R^3)$ being tangential to the boundary and satisfying \eqref{cond-q}, where $s\in [0,1/2]$ is arbitrary. In fact, the condition $\mathbf{g}=\0$ can be replaced by the general condition that $\mathbf{g}\in H^{1/2}(\p\O,\Bbb R^3)$ is tangential to the boundary.

Let $\H_0$ be a magnetic lifting for the magnetic boundary data $\mathbf{q}$, i.e., $\H_0$ is an extension for $\mathbf{q}$ satisfying
$$\div\H_0=0\text{ in $\O$,}\quad \H_0\times\mathbf{n}=\mathbf{q} \text{ on $\p\O$}.$$
To overcome the difficulty caused by the term $\varkappa \rot \H_0 \times \widetilde \H$ in the homogenized system \eqref{MHD3}, where $\widetilde \H=\H-\H_0$, Alekseev \cite{Alekseev16} adopted a magnetic lifting satisfying the $\div$-$\rot$ system
$$ \rot\H_0=\0,\quad \div\H_0=0\text{ in $\O$,}\quad \H_0\times\mathbf{n}=\mathbf{q} \text{ on $\p\O$}.$$
Hence the extra condition \eqref{cond-q} is necessary to guarantee the above system admitting a solution. In that situation, indeed, the term $\varkappa \rot \H_0 \times \widetilde \H$ disappears, but the cost is that more unnatural restrictions are imposed on $\mathbf{q}$.

In this paper, using the construction of the hydrodynamic lifting suggested in \cite{Alekseev04} and a new construction of the magnetic lifting (see Lemma \ref{lemma-2.2}), we show the global solvability of the problem \eqref{MHD1}-(\ref{MHD-bc}) \emph{without the condition \eqref{cond-q}} by applying Schauder's fixed point theorem. We use integration by parts to deal with the term $\varkappa \rot \H_0 \times \widetilde \H$, see the key inequality \eqref{ineq3}.

Throughout this paper, we use $L^2(\O)$, $H^1(\O)$, $H^{-1}(\O)$, $H^s(\p\O)$ to denote the usual Lebesgue spaces, Sobolev spaces for scalar functions, and $L^2(\O,\Bbb R^3)$, $H^1(\O,\Bbb R^3)$,  $H^{-1}(\O,\Bbb R^3)$, $H^s(\p\O,\Bbb R^3)$ to denote the corresponding spaces of vector fields. However we use the same notation to denote the norm of both scalar functions and vector fields. For instance, we write $\|\phi\|_{L^2(\O)}$ for $\phi\in L^2(\O)$ and also $\|\u\|_{L^2(\O)}$ for $\u\in L^2(\O,\Bbb R^3)$.
We also use the following notations:
$$\aligned
&H^1_0(\div0,\O)=\{\u\in H^1_0(\O,\Bbb R^3): \div\u=0\;\text{in }\O\},\\
&H^1_{t0}(\O,\Bbb R^3)=\{\u\in H^1(\O,\Bbb R^3):  \u\times\mathbf{n}=\0\;\text{on }\p\O\},\\
&H^1_{t0}(\div0,\O)=\{\u\in H^1(\O,\Bbb R^3): \div\u=0\;\text{in }\O, \u\times\mathbf{n}=\0\;\text{on }\p\O\},\\
&H^{1/2}_{T}(\p\O,\Bbb R^3)=\{\u\in H^{1/2}(\p\O,\Bbb R^3):  \u\cdot\mathbf{n}=0\;\text{on }\p\O\}.\\
\endaligned
$$
The harmonic Dirichlet fields $\Bbb H_D(\O)$ is defined by
$$
\aligned
&\Bbb H_D(\O)=\{\u\in L^2(\O,\Bbb R^3): \rot\u=\0,\;\div\u=0\text{ in $\O$,}\; \u\times\mathbf{n}=\0 \text{ on $\p\O$}\}.
\endaligned
$$

Now our main result reads as follows.
\begin{Thm}\label{existence}
Let $\nu$, $\nu_1$ and $\varkappa$ be three positive constants. Assume $\mathbf{f}\in H^{-1}(\O,\R^3)$, $\mathbf{j}\in L^2(\O,\R^3)$ and $\mathbf{g},\mathbf{q}\in H^{1/2}_{T}(\p\O,\Bbb R^3)$. Then \eqref{MHD1}-\eqref{MHD-bc}
admits a weak solution
$$(\u,p,\mathbf{E},\mathbf{H})\in H^1(\O,\R^3)\times L^2(\O)\times L^2(\O,\R^3)\times H^1(\O,\R^3).$$
\end{Thm}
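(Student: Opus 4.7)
The plan is to reduce the inhomogeneous boundary value problem \eqref{MHD1}--\eqref{MHD-bc} to a homogeneous one by means of two boundary liftings and then apply Schauder's fixed point theorem. First, I would construct a hydrodynamic lifting $\u_0\in H^1(\O,\R^3)$ as in \cite{Alekseev04}: divergence-free, equal to $\mathbf{g}$ on $\p\O$, and enjoying the Hopf-type property that for any $\delta>0$ one can arrange $\|\u_0\|_{L^3(\O)}<\delta$. Next, invoking Lemma \ref{lemma-2.2}, I would obtain a magnetic lifting $\H_0\in H^1(\O,\R^3)$ satisfying $\div\H_0=0$ in $\O$ and $\H_0\times\mathbf{n}=\mathbf{q}$ on $\p\O$, but \emph{without} the curl-free constraint used in \cite{Alekseev16}. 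Relaxing this constraint is precisely what frees the argument from the compatibility condition \eqref{cond-q}, at the price of having to deal with the new term $\varkappa\,\rot\H_0\times\widetilde\H$.

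Writing $\u=\tilde\u+\u_0$ and $\H=\tilde\H+\H_0$, one obtains a homogenized system for $(\tilde\u,p,\E,\tilde\H)$ with $\tilde\u\in H^1_0(\div0,\O)$ and $\tilde\H\in H^1_{t0}(\div0,\O)$. To set up the Schauder scheme I would define a map $T:(\w,\mathbf{b})\mapsto(\tilde\u,\tilde\H)$ on a closed convex bounded subset of $L^4(\O,\R^3)\times L^4(\O,\R^3)$ by freezing the convective and Lorentz-coupling nonlinearities at $(\w+\u_0,\mathbf{b}+\H_0)$ and solving the resulting linear coupled system via Lax--Milgram in the divergence-free energy spaces $H^1_0(\div0,\O)\times H^1_{t0}(\div0,\O)$. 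The electric field $\E$ is eliminated beforehand by combining $\rot\E=\0$ with the Ohm--Amp\`ere relation in \eqref{MHD1}, while the pressure $p$ is recovered at the end by de Rham's theorem. Continuity and the needed compactness of $T$ follow from standard linear bounds together with the Rellich embedding $H^1(\O)\hookrightarrow\hookrightarrow L^4(\O)$.

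The principal obstacle is the a priori estimate ensuring that $T$ leaves a suitable ball invariant. Testing the momentum equation with $\tilde\u$ and the magnetic equation with $\tilde\H$ yields $\nu\|\nabla\tilde\u\|_{L^2}^2+\nu_1\|\rot\tilde\H\|_{L^2}^2$ on the left, and on the right the classical MHD cancellation $\int(\rot\tilde\H\times\tilde\H)\cdot\tilde\u=\int(\tilde\H\times\tilde\u)\cdot\rot\tilde\H$ eliminates the principal cubic Lorentz term; likewise the cross terms $\int(\rot\tilde\H\times\H_0)\cdot\tilde\u$ and $\int(\H_0\times\tilde\u)\cdot\rot\tilde\H$ cancel. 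The terms involving $\u_0$ are tamed in the classical Hopf manner by choosing $\|\u_0\|_{L^3}$ small. The genuinely new difficulty is $\varkappa\int_\O(\rot\H_0\times\tilde\H)\cdot\tilde\u\,dx$, in which a derivative of $\H_0$ appears and in which $\H_0$ is not small. Here the integration-by-parts trick encapsulated in \eqref{ineq3} is decisive: using $(\mathbf{a}\times\mathbf{b})\cdot\mathbf{c}=\mathbf{a}\cdot(\mathbf{b}\times\mathbf{c})$ and the identity $\rot(\tilde\H\times\tilde\u)=(\tilde\u\cdot\nabla)\tilde\H-(\tilde\H\cdot\nabla)\tilde\u$ (valid since $\div\tilde\u=\div\tilde\H=0$), one rewrites
\[
\int_\O(\rot\H_0\times\tilde\H)\cdot\tilde\u\,dx=\int_\O\H_0\cdot\bigl[(\tilde\u\cdot\nabla)\tilde\H-(\tilde\H\cdot\nabla)\tilde\u\bigr]\,dx,
\]
the boundary contribution vanishing because $\tilde\u=\0$ on $\p\O$. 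Crucially, no derivative of $\H_0$ remains, and by H\"older and Sobolev embedding the right-hand side is bounded by $C\|\H_0\|_{L^3}\|\tilde\u\|_{H^1}\|\tilde\H\|_{H^1}$. The entire point of Lemma \ref{lemma-2.2} is to supply $\H_0$ whose $L^3$ norm can be made as small as desired (a Hopf-type property for the magnetic lifting); this smallness, combined with Young's inequality, absorbs the term into $\nu\|\nabla\tilde\u\|_{L^2}^2+\nu_1\|\rot\tilde\H\|_{L^2}^2$, closing the energy estimate. Schauder's theorem then produces a fixed point $(\tilde\u,\tilde\H)$, and recovering $(p,\E)$ via de Rham and the Ohm--Amp\`ere relation completes the proof.
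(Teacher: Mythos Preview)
Your approach mirrors the paper's: the same two liftings with Hopf-type smallness, the same homogenization, the linearization/Lax--Milgram/Schauder scheme, and above all the same integration-by-parts manoeuvre \eqref{ineq3} that moves the derivative off $\H_0$ and lets the $L^3$-smallness of $\H_0$ from Lemma \ref{lemma-2.2} absorb the dangerous term.

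There is, however, one genuine gap. You propose to run Lax--Milgram on $H^1_0(\div0,\O)\times H^1_{t0}(\div0,\O)$, but the bilinear form is \emph{not} coercive there in general: for any nonzero $\B\in\mathbb{H}_D(\O)$ one has $\rot\B=\0$ and hence $a((\0,\B),(\0,\B))=0$. The paper explicitly allows $\p\O$ to have components $\Gamma_0,\dots,\Gamma_m$, and then $\dim\mathbb{H}_D(\O)=m$ may be positive. The paper therefore works in $H^1_0(\div0,\O)\times[H^1_{t0}(\div0,\O)\cap\mathbb{H}_D(\O)^\perp]$ and invokes the $\div$--$\rot$ inequality of Lemma \ref{lemma-2.3}, valid precisely on that orthogonal complement, to convert control of $\|\rot\widetilde\H\|_{L^2}$ into control of $\|\widetilde\H\|_{H^1}$. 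Without this restriction Lax--Milgram fails and your map $T$ is not even well defined; with it, one also notes (trivially) that testing the magnetic equation against any $\B\in\mathbb{H}_D(\O)$ gives $0=0$, so the solution found in the complement is still a bona fide weak solution.

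Two cosmetic points. Setting up the fixed point in $L^4\times L^4$ rather than in $H^1\times H^1$ (as the paper does, with compactness obtained through strong $L^3$-convergence) is a harmless variant, but make sure the frozen convective field $\w$ lives in a divergence-free space with vanishing normal trace so that $\int_\O(\w\cdot\nabla)\tilde\u\cdot\tilde\u\,\mathrm dx=0$ actually holds. And in the linearized energy identity the Lorentz cancellation is between $\int_\O(\rot\widetilde\H\times\D)\cdot\tilde\u\,\mathrm dx$ and $\int_\O(\D\times\tilde\u)\cdot\rot\widetilde\H\,\mathrm dx$ with the frozen field $\D$, not the cubic version you wrote; the mechanism is of course identical.
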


\begin{Rem}
Since the magnetic field satisfies the boundary condition $\H\times\mathbf{n}=\mathbf{q}$ on $\p\O$, $\mathbf{q}$ must satisfy the compatible condition $\mathbf{q}\cdot\mathbf{n}=0$ on $\p\O$.
So the assumption we propose on the magnetic boundary data is sharp.
\end{Rem}

\section{Proof of Theorem \ref{existence}}

In order to prove the global solvability result, we introduce two trace lifting lemmas.
\begin{Lem}$($\cite[Lemma 2.2]{Alekseev04}$)$\label{lemma-2.1}
For every vector-valued function $\mathbf{g}\in H_T^{1/2}(\p\O,\Bbb R^3)$ and every number $\varepsilon>0$, there exists a vector-valued function $\u_\varepsilon\in H^1(\O,\Bbb R^3)$ such that
$$\aligned
&\div\u_\varepsilon=0\text{ in $\O$, } \u_\varepsilon=\mathbf{g} \text{ on $\p\O$, }\\
&\|\u_\varepsilon\|_{L^4(\O)}\leq \varepsilon\|\mathbf{g}\|_{H^{1/2}(\p\O)},\\
&\|\u_\varepsilon\|_{H^1(\O)}\leq C_\varepsilon\|\mathbf{g}\|_{H^{1/2}(\p\O)},\\
\endaligned
$$
where the constant $C_\varepsilon$ depends on $\varepsilon$ and $\O$.
\end{Lem}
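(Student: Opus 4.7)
The plan is to construct $\u_\varepsilon$ as a thin boundary-layer truncation of an arbitrary $H^1$-extension of $\mathbf{g}$, corrected by a Bogovskii divergence solve. With layer width $\delta=\delta(\varepsilon)$ chosen at the end, the $L^4$-smallness will follow from H\"older's inequality on the thin support together with the 3D Sobolev embedding, while the $H^1$-norm is permitted to blow up polynomially in $\delta^{-1}$.

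Using the standard right inverse of the trace map, pick $\widetilde{\mathbf{g}}\in H^1(\O,\R^3)$ with $\widetilde{\mathbf{g}}|_{\p\O}=\mathbf{g}$ and $\|\widetilde{\mathbf{g}}\|_{H^1(\O)}\leq C\|\mathbf{g}\|_{H^{1/2}(\p\O)}$. Fix a smooth cutoff $\phi:\R\to[0,1]$ with $\phi(0)=1$ and $\supp\phi\subset[0,1)$. For $0<\delta<\delta_0$ (with $\delta_0$ the thickness of a valid tubular neighborhood of $\p\O$) set
\[
\G_\delta(x):=\phi\!\left(\frac{d(x)}{\delta}\right)\widetilde{\mathbf{g}}(x),\qquad d(x):=\dist(x,\p\O).
\]
Then $\G_\delta|_{\p\O}=\mathbf{g}$, $\G_\delta$ is supported in the tubular neighborhood $V_\delta:=\{d<\delta\}$ of measure $O(\delta)$, and, using the trace-type bound $\int_{V_\delta}|\widetilde{\mathbf{g}}|^2\,dx\leq C\delta\|\mathbf{g}\|_{H^{1/2}}^2$,
\[
\|\G_\delta\|_{L^4(\O)}\leq\|\widetilde{\mathbf{g}}\|_{L^6(\O)}|V_\delta|^{1/12}\leq C\delta^{1/12}\|\mathbf{g}\|_{H^{1/2}},\qquad\|\G_\delta\|_{H^1(\O)}\leq C\delta^{-1/2}\|\mathbf{g}\|_{H^{1/2}}.
\]

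Since $\mathbf{g}\cdot\mathbf{n}=0$ on $\p\O$, integration by parts yields $\int_\O\div\G_\delta\,dx=0$, so a partition-of-unity patching of Bogovskii's explicit integral operator on star-shaped coordinate slabs of $V_\delta$ supplies $\w_\delta\in H^1_0(\O,\R^3)$ with $\div\w_\delta=-\div\G_\delta$ in $\O$, $\supp\w_\delta\subset V_\delta$, and $\|\w_\delta\|_{H^1(\O)}\leq C\delta^{-1/2}\|\mathbf{g}\|_{H^{1/2}}$. The support confinement and the Sobolev embedding inside the layer also yield $\|\w_\delta\|_{L^4(\O)}\leq C\delta^{1/12}\|\mathbf{g}\|_{H^{1/2}}$. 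Setting $\u_\varepsilon:=\G_\delta+\w_\delta$ produces a divergence-free $H^1$-extension of $\mathbf{g}$ with $\|\u_\varepsilon\|_{L^4}\leq C\delta^{1/12}\|\mathbf{g}\|_{H^{1/2}}$ and $\|\u_\varepsilon\|_{H^1}\leq C\delta^{-1/2}\|\mathbf{g}\|_{H^{1/2}}$; choosing $\delta=\delta(\varepsilon)$ sufficiently small closes the argument with $C_\varepsilon$ of polynomial order $\varepsilon^{-6}$.

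The main technical obstacle is securing the Bogovskii corrector $\w_\delta$ with support compactly contained in the thin layer $V_\delta$: without support confinement, only the global bound $\|\w_\delta\|_{L^4}\leq C\|\w_\delta\|_{H^1}\leq C\delta^{-1/2}$ is available, which would destroy the smallness. This refinement is classical but delicate, and is accomplished by decomposing $V_\delta$ into finitely many star-shaped coordinate slabs, applying Bogovskii's explicit integral representation on each (whose output is compactly supported in its domain), and coordinating the pieces by a partition of unity so that local compatibility conditions are inherited from the single global one $\int_\O\div\G_\delta\,dx=0$.
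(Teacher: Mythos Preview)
The paper does not give its own proof of this lemma; it simply cites \cite[Lemma~2.2]{Alekseev04}. So there is no in-paper argument to compare against, only the classical construction behind the cited reference.

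Your strategy (thin-layer cutoff plus Bogovskii correction) is natural, but as written it has a genuine gap in the $L^4$ estimate for the corrector $\w_\delta$. You assert that ``support confinement and the Sobolev embedding inside the layer'' give $\|\w_\delta\|_{L^4(\O)}\le C\delta^{1/12}\|\mathbf g\|_{H^{1/2}}$, but the only route you have to $L^4$ from your data is $\|\w_\delta\|_{L^4}\le |V_\delta|^{1/12}\|\w_\delta\|_{L^6}\le C\delta^{1/12}\|\w_\delta\|_{H^1}$, and you have just recorded $\|\w_\delta\|_{H^1}\le C\delta^{-1/2}$. That yields $\|\w_\delta\|_{L^4}\le C\delta^{-5/12}$, which diverges; interpolating through the thin-strip Poincar\'e bound $\|\w_\delta\|_{L^2}\le C\delta\|\nabla\w_\delta\|_{L^2}$ only improves this to $\delta^{-1/4}$. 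The difference with $\G_\delta$ is that $|\G_\delta|\le|\widetilde{\mathbf g}|$ pointwise, so $\|\G_\delta\|_{L^6}$ is $O(1)$ independently of $\delta$, whereas for the Bogovskii output you have no such pointwise control and are forced through the $H^1$ norm. Moreover, the claim that the Bogovskii patching in $V_\delta$ yields $\|\w_\delta\|_{H^1}\le C\delta^{-1/2}$ with $C$ independent of $\delta$ is itself delicate: if the star-shaped slabs have thickness $\delta$ and tangential extent $O(1)$, the Bogovskii constant per slab degenerates with the aspect ratio.

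The construction behind the cited lemma sidesteps both issues by avoiding a divergence correction altogether. Since $\mathbf g$ is tangential, every boundary flux vanishes, so one can choose a divergence-free extension $\mathbf a\in H^1(\O,\R^3)$ and write $\mathbf a=\curl\mathbf b$ with $\mathbf b\in H^2$. One then takes $\u_\varepsilon=\curl(\phi_\varepsilon\mathbf b)$, where $\phi_\varepsilon$ is a Hopf-type cutoff equal to $1$ near $\p\O$ and satisfying $|\nabla\phi_\varepsilon(x)|\le \varepsilon/d(x)$. This is automatically divergence-free with the correct trace, and the $L^4$-smallness of both pieces $\phi_\varepsilon\mathbf a$ and $\nabla\phi_\varepsilon\times\mathbf b$ follows from the thin support together with Hardy's inequality; no corrector, and hence no runaway $H^1$ norm, enters the $L^4$ estimate.
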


\begin{Lem}\label{lemma-2.2}
For every vector-valued function $\mathbf{q}\in H_T^{1/2}(\p\O,\Bbb R^3)$ and every number $\varepsilon>0$, there exists a vector-valued function $\H_\varepsilon\in H^1(\O,\Bbb R^3)$ such that
$$\aligned
&\div\H_\varepsilon=0\text{ in $\O$, } \H_\varepsilon\times\mathbf{n}=\mathbf{q} \text{ on $\p\O$, }\\
&\|\H_\varepsilon\|_{L^4(\O)}\leq \varepsilon\|\mathbf{q}\|_{H^{1/2}(\p\O)},\\
&\|\H_\varepsilon\|_{H^1(\O)}\leq C_\varepsilon\|\mathbf{q}\|_{H^{1/2}(\p\O)},\\
\endaligned
$$
where the constant $C_\varepsilon$ depends on $\varepsilon$ and $\O$.
\end{Lem}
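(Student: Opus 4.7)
The key observation is the pointwise identity $\mathbf{q}=-(\mathbf{q}\times\mathbf{n})\times\mathbf{n}$, valid on $\p\O$ because $\mathbf{q}$ is tangential: expanding the triple product yields $(\mathbf{q}\times\mathbf{n})\times\mathbf{n}=(\mathbf{q}\cdot\mathbf{n})\mathbf{n}-|\mathbf{n}|^2\mathbf{q}=-\mathbf{q}$. Consequently, if $\H_\varepsilon$ is constructed so that its \emph{full} Dirichlet trace on $\p\O$ equals the tangential field $\tilde{\mathbf{q}}:=-\mathbf{q}\times\mathbf{n}$, then automatically $\H_\varepsilon\times\mathbf{n}=\tilde{\mathbf{q}}\times\mathbf{n}=\mathbf{q}$. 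This reduces the tangential-trace lifting problem for $\mathbf{q}$ to a Dirichlet-trace lifting problem for $\tilde{\mathbf{q}}$, to which Lemma~\ref{lemma-2.1} applies directly.

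The plan is twofold. First, I would verify that $\tilde{\mathbf{q}}\in H^{1/2}_T(\p\O,\R^3)$ with $\|\tilde{\mathbf{q}}\|_{H^{1/2}(\p\O)}\leq C\|\mathbf{q}\|_{H^{1/2}(\p\O)}$. Tangentiality is immediate since $(\mathbf{q}\times\mathbf{n})\cdot\mathbf{n}=0$. For the norm bound, I would use the $C^{1,1}$ regularity of $\p\O$, which renders $\mathbf{n}$ Lipschitz on $\p\O$; pointwise multiplication (hence cross product) by a Lipschitz function is a bounded operation on $H^{1/2}(\p\O)$. Second, I would apply Lemma~\ref{lemma-2.1} to the datum $\tilde{\mathbf{q}}$ with the rescaled parameter $\varepsilon/C$, obtaining $\H_\varepsilon\in H^1(\O,\R^3)$ with $\div\H_\varepsilon=0$ in $\O$, $\H_\varepsilon=\tilde{\mathbf{q}}$ on $\p\O$, and the two desired norm estimates (after absorbing $C$ into $C_\varepsilon$).

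By the identity in the first paragraph, the tangential trace $\H_\varepsilon\times\mathbf{n}$ equals $\mathbf{q}$, and the proof is complete. The conceptual point is the reduction of a tangential-trace lifting to a Dirichlet-trace lifting via the involution $\mathbf{q}\mapsto -\mathbf{q}\times\mathbf{n}$ on the space of tangential fields; the only technical wrinkle is the continuity of this multiplication on $H^{1/2}(\p\O)$, which is a standard consequence of the $C^{1,1}$ regularity hypothesis on $\p\O$ stated at the outset of the paper.
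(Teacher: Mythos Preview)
Your proposal is correct and follows essentially the same approach as the paper: define $\tilde{\mathbf{q}}=-\mathbf{q}\times\mathbf{n}=\mathbf{n}\times\mathbf{q}$, apply Lemma~\ref{lemma-2.1} to this tangential datum with a rescaled parameter, and use the triple-product identity $(\mathbf{n}\times\mathbf{q})\times\mathbf{n}=\mathbf{q}$ to recover the desired tangential trace. The paper's proof is slightly more terse but identical in substance.
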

\begin{proof}
Let $\varepsilon$ be an arbitrary positive number. Since $\mathbf{n}\times\mathbf{q}\in H_T^{1/2}(\p\O,\Bbb R^3)$, then by Lemma \ref{lemma-2.1} there exists a vector-valued function $\u_\varepsilon\in H^1(\O,\Bbb R^3)$ such that
$$\aligned
&\div\u_\varepsilon=0\text{ in $\O$, } \u_\varepsilon=\mathbf{n}\times\mathbf{q}\text{ on $\p\O$, }\\
&\|\u_\varepsilon\|_{L^4(\O)}\leq \varepsilon\|\mathbf{n}\times\mathbf{q}\|_{H^{1/2}(\p\O)},\\
&\|\u_\varepsilon\|_{H^1(\O)}\leq C_\varepsilon\|\mathbf{n}\times\mathbf{q}\|_{H^{1/2}(\p\O)},\\
\endaligned
$$
where the constant $C_\varepsilon$ depends on $\varepsilon$ and $\O$. Since
$$\|\mathbf{n}\times\mathbf{q}\|_{H^{1/2}(\p\O)}\leq C(\O)\|\mathbf{q}\|_{H^{1/2}(\p\O)},$$
we obtain
$$\|\u_\varepsilon\|_{L^4(\O)}\leq C(\O)\varepsilon\|\mathbf{q}\|_{H^{1/2}(\p\O)}.$$
Set $\H_\varepsilon=\u_{\varepsilon/C(\O)}$. Then we have
\begin{equation*}
\H_\varepsilon\times\mathbf{n}=(\mathbf{n}\times\mathbf{q})\times\mathbf{n}=\mathbf{q}\text{ on $\p\O$, } \qquad
\|\H_\varepsilon\|_{L^4(\O)}\leq \varepsilon\|\mathbf{q}\|_{H^{1/2}(\p\O)}. \hfill
\qedhere
\end{equation*}
\end{proof}
We also need the following $\div$-$\rot$ inequality, which is a consequence of  \cite[p. 209, Theorem 3]{DL1990} and \cite[p. 213, Remark 2]{DL1990}, see also \cite[Lemma 2.1]{Alekseev16}.
\begin{Lem}\label{lemma-2.3}
For any $\mathbf{B}\in H^1_{t0}(\O,\Bbb R^3)\cap \Bbb H_D(\O)^\perp$, it holds that
$$\|\mathbf{B}\|_{H^1(\Omega)}\leq C(\O)(\|\div\mathbf{B}\|_{L^2(\Omega)}+\|\rot\mathbf{B}\|_{L^2(\Omega)}).$$
\end{Lem}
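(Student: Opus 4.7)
The plan is to derive the improved bound from the general div-curl-grad inequality already attributed to \cite{DL1990}: for every $\mathbf{B}\in H^1_{t0}(\O,\Bbb R^3)$,
\[
\|\mathbf{B}\|_{H^1(\O)}\leq C(\O)\bigl(\|\mathbf{B}\|_{L^2(\O)}+\|\div\mathbf{B}\|_{L^2(\O)}+\|\rot\mathbf{B}\|_{L^2(\O)}\bigr).
\]
Lemma \ref{lemma-2.3} then amounts to a Poincar\'e-type improvement: under the additional orthogonality $\mathbf{B}\perp\Bbb H_D(\O)$ in $L^2$, the $\|\mathbf{B}\|_{L^2}$ term on the right-hand side can be absorbed.

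I would argue by contradiction. If the claimed inequality fails, there exists a sequence $\{\mathbf{B}_k\}\subset H^1_{t0}(\O,\Bbb R^3)\cap \Bbb H_D(\O)^\perp$ with
\[
\|\mathbf{B}_k\|_{H^1(\O)}=1,\qquad \|\div\mathbf{B}_k\|_{L^2(\O)}+\|\rot\mathbf{B}_k\|_{L^2(\O)}\longrightarrow 0.
\]
By the Rellich compactness theorem, after extracting a subsequence (not relabeled), $\mathbf{B}_k\rightharpoonup\mathbf{B}$ in $H^1(\O,\Bbb R^3)$ and $\mathbf{B}_k\to\mathbf{B}$ strongly in $L^2(\O,\Bbb R^3)$. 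Applying the general bound to the differences $\mathbf{B}_k-\mathbf{B}_j$ shows that $\{\mathbf{B}_k\}$ is Cauchy in $H^1$, so in fact $\mathbf{B}_k\to\mathbf{B}$ strongly in $H^1$, with $\|\mathbf{B}\|_{H^1}=1$.

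Passing to the limit in the distributional equations gives $\div\mathbf{B}=0$ and $\rot\mathbf{B}=\mathbf{0}$ in $\O$, while the condition $\mathbf{B}\times\mathbf{n}=\mathbf{0}$ persists by continuity of the tangential trace on $H^1$. Hence $\mathbf{B}\in\Bbb H_D(\O)$. Since $\Bbb H_D(\O)$ is finite-dimensional and therefore $L^2$-closed, its orthogonal complement $\Bbb H_D(\O)^\perp$ is closed in $L^2$ and contains every $\mathbf{B}_k$; the $L^2$-limit $\mathbf{B}$ thus also lies in $\Bbb H_D(\O)^\perp$. Therefore $\mathbf{B}\in\Bbb H_D(\O)\cap\Bbb H_D(\O)^\perp=\{\mathbf{0}\}$, contradicting $\|\mathbf{B}\|_{H^1}=1$.

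The main obstacle is the upgrade from weak to strong $H^1$-convergence of $\{\mathbf{B}_k\}$; this is exactly where the general div-curl-grad inequality on $H^1_{t0}(\O,\Bbb R^3)$ (applied to differences) has to be combined with the Rellich embedding. Once that compactness step is in place, the identification of the limit as an element of $\Bbb H_D(\O)$ and the orthogonality forcing $\mathbf{B}=\mathbf{0}$ are essentially automatic.
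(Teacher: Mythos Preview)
The paper does not actually give its own proof of Lemma~\ref{lemma-2.3}; it merely records the statement as a consequence of results in \cite{DL1990} (and \cite{Alekseev16}). Your compactness argument is correct and is exactly the standard route one takes to remove the lower-order $\|\mathbf{B}\|_{L^2}$ term from the general div--curl estimate: it is an instance of the Peetre--Tartar lemma. One small remark: the finite-dimensionality of $\Bbb H_D(\O)$ is true but not needed for your argument, since the $L^2$-orthogonal complement of any subset is automatically closed; the limit $\mathbf{B}\in\Bbb H_D(\O)^\perp$ follows directly from strong $L^2$-convergence.
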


Now we are in a position to prove the global solvability result.

\begin{proof}[Proof of Theorem \ref{existence}]
By eliminating $\E$, \eqref{MHD1}-\eqref{MHD-bc} turns into the following form
\begin{equation}\label{MHD2}
\begin{cases}
-\nu\Delta \u+(\u\cdot\nabla)\u+\nabla p-\varkappa\rot\H\times\H={\bf f}\quad&\text{in } \O,\\
\rot(\nu_1\rot\H+\varkappa\H\times\u-\nu_1\mathbf{j})=\0\quad & \text{in } \O,\\
\div\u=\div \H=0\quad & \text{in } \O,\\
\u=\mathbf{g},\quad\H\times\mathbf{n}=\mathbf{q}\quad & \text{on }\p\O.
\end{cases}
\end{equation}
Let $\u_0=\u_{\varepsilon_0}$ be the lifting for the boundary data ${\bf g}$ in Lemma \ref{lemma-2.1}, and $\H_0=\H_{\varepsilon_0}$ be the lifting for the boundary data ${\bf q}$ in Lemma \ref{lemma-2.2}. Here the constant $\varepsilon_0$ is to be determined.
Introducing new unknown variables $\tilde{\u}=\u-\u_0$ and $\widetilde{\H}=\H-\H_0$, we can reduce \eqref{MHD2} to the following homogeneous boundary value problem
\begin{equation}\label{MHD3}
\begin{cases}
-\nu\Delta \tilde{\u}+(\tilde{\u}\cdot\nabla)\tilde{\u}+(\tilde{\u}\cdot\nabla)\u_0+(\u_0\cdot\nabla)\tilde{\u}+\nabla p-\varkappa\rot\widetilde{\H}\times\widetilde{\H}\\
\quad-\varkappa\rot\widetilde{\H}\times\H_0-\varkappa\rot\H_0\times\widetilde{\H}={\bf F}\quad&\text{in } \O,\\
\rot(\nu_1\rot\widetilde{\H}+\varkappa\widetilde{\H}\times\tilde{\u}+\varkappa\widetilde{\H}\times\u_0+\varkappa\H_0\times\tilde{\u}-\mathbf{J})=\0\quad & \text{in } \O,\\
\div\tilde{\u}=\div \widetilde{\H}=0\quad & \text{in } \O,\\
\tilde{\u}=\0,\quad\widetilde{\H}\times\mathbf{n}=\0\quad & \text{on }\p\O,
\end{cases}
\end{equation}
where the functions ${\bf F}$ and ${\bf J}$ are given by
\begin{equation}\label{FJ}
\aligned
&{\bf F}={\bf f}+\nu\Delta \u_0-(\u_0\cdot\nabla)\u_0+\varkappa\rot\H_0\times\H_0,\\
&\mathbf{J}=\nu_1\mathbf{j}-\nu_1\rot\H_0-\varkappa\H_0\times\u_0.
\endaligned
\end{equation}
We claim that if we get a weak solution $(\tilde{\u},p,\mathbf{\widetilde{H}})$ of \eqref{MHD3}, then \eqref{MHD1}-\eqref{MHD-bc} admits a weak solution. In fact, set $\u=\u_0+\tilde{\u}$ and $\H=\H_0+\widetilde{\H}$, then $(\u,p,\H)$ solves \eqref{MHD2}. Furthermore, $(\u,p,\mathbf{E},\H)$ solves \eqref{MHD1}-\eqref{MHD-bc}, where $\mathbf{E}=\nu_1\rot\H+\varkappa\H\times\u-\nu_1\mathbf{j}$.
So we only need to prove the existence of a weak solution of \eqref{MHD3}. In the sequel, we do this by four steps.

{\it Step 1.} For any given $(\w,\D)\in H^1_0(\div0,\O)\times [H^1_{t0}(\div0,\O)\cap\mathbb{H}_D(\O)^\perp]$, we prove existence of a unique solution of the following system
\begin{equation}\label{MHD-wD}
\begin{cases}
-\nu\Delta \tilde{\u}+(\w\cdot\nabla)\tilde{\u}+(\tilde{\u}\cdot\nabla)\u_0+(\u_0\cdot\nabla)\tilde{\u}+\nabla p-\varkappa\rot\widetilde{\H}\times\D\\
\quad-\varkappa\rot\widetilde{\H}\times\H_0-\varkappa\rot\H_0\times\widetilde{\H}={\bf F}\quad&\text{in } \O,\\
\rot(\nu_1\rot\widetilde{\H}+\varkappa\D\times\tilde{\u}+\varkappa\widetilde{\H}\times\u_0+\varkappa\H_0\times\tilde{\u}-\mathbf{J})=\0\quad & \text{in } \O,\\
\div\tilde{\u}=\div \widetilde{\H}=0\quad & \text{in } \O,\\
\tilde{\u}=\0,\quad\widetilde{\H}\times\mathbf{n}=\0\quad & \text{on }\p\O,
\end{cases}
\end{equation}
where the functions ${\bf F}$ and ${\bf J}$ are defined by \eqref{FJ}.

We define a bilinear functional
\begin{align*}
&\ a((\tilde \u,\widetilde \H),(\bv,\B))=\int_{\O}\nu\nabla\tilde \u:\nabla\bv~\mathrm dx+\\
&\ \int_{\O}\left[(\w\cdot\nabla)\tilde \u+(\tilde{\u}\cdot\nabla)\u_0+(\u_0\cdot\nabla)\tilde{\u}- \varkappa\rot\widetilde\H\times\D -\varkappa \rot \widetilde\H \times \H_0 - \varkappa\rot \H_0 \times \widetilde \H\right]\cdot\bv~\mathrm dx\\
&\ +\int_{\O}( \nu_1 \rot \widetilde \H + \varkappa \D \times \tilde\u + \varkappa \widetilde\H \times \u_0 +\varkappa \H_0\times \tilde\u )\cdot\rot\B ~\mathrm dx.
\end{align*}
Then \eqref{MHD-wD} is equivalent to the formulation
$$a((\tilde\u,\widetilde\H),(\bv,\B))=\langle\F,\mathbf{v}\rangle_{H^{-1}(\O),H^1_0(\O)}+\int_{\O}\mathbf{J}\cdot\rot\B ~\mathrm dx.$$
On account that
$$\int_\Omega (\w \cdot \nabla) \tilde\u \cdot \tilde \u ~\mathrm dx=\int_\Omega (\u_0 \cdot \nabla) \tilde\u \cdot \tilde \u ~\mathrm dx= 0,$$
$$\int_\Omega (\tilde{\u}\cdot\nabla)\u_0\cdot \tilde\u~\mathrm dx=-\int_\Omega (\tilde{\u}\cdot\nabla)\tilde\u\cdot\u_0 ~\mathrm dx,$$
$$
\int_\Omega \varkappa \rot \widetilde\H \times \D \cdot \tilde\u~\mathrm dx= \int_\Omega \varkappa \D \times \tilde\u \cdot \rot\widetilde\H ~\mathrm dx,
$$
$$
\int_\Omega \varkappa \rot \widetilde\H \times \H_0 \cdot \tilde\u~\mathrm dx=\int_\Omega \varkappa\H_0\times \tilde\u \cdot \rot \widetilde\H~\mathrm dx,
$$
we have that
$$
\aligned
a((\tilde\u,\widetilde\H),(\tilde\u,\widetilde\H)) &= \int_\Omega \left(\nu |\nabla\tilde \u|^2-(\tilde{\u}\cdot\nabla)\tilde\u\cdot\u_0-\varkappa\rot\H_0\times \widetilde \H\cdot\tilde\u\right)~\mathrm dx\\
 &+ \int_\Omega\left(\nu_1 |\rot \widetilde\H|^2 +  \varkappa \widetilde\H \times \u_0 \cdot \rot\widetilde \H \right)~\mathrm dx.
\endaligned
$$

Next we \textbf{claim} that the functional $a$ is coercive. Let us estimate term by term. By H\"older's inequality and Lemma \ref{lemma-2.1}, we get
\begin{equation}\label{ineq0}
\aligned
\left|\int_\Omega (\tilde{\u}\cdot\nabla)\tilde\u\cdot\u_0 ~\mathrm dx\right|&\leq \|\tilde{\u}\|_{L^6(\Omega)} \|\nabla\tilde\u\|_{L^2(\Omega)} \|\u_0\|_{L^3(\Omega)}\leq C_1\varepsilon_0  \|\mathbf g\|_{H^{1/2}(\partial\Omega)}\|\nabla\tilde\u\|_{L^2(\Omega)}^2.
\endaligned
\end{equation}
By H\"older's inequality, Lemma \ref{lemma-2.1} and  Lemma \ref{lemma-2.3}, it holds that
\begin{equation}\label{ineq2}
\aligned
\left|\int_\Omega \varkappa \widetilde\H \times \u_0 \cdot \rot\widetilde \H~\mathrm dx\right|&\leq \varkappa \|\widetilde \H\|_{L^6(\Omega)} \|\u_0\|_{L^3(\Omega)} \|\rot \widetilde\H\|_{L^2(\Omega)}\\
  &\leq C_2 \varkappa \varepsilon_0 \|\mathbf g\|_{H^{1/2}(\partial\Omega)}\|\rot\widetilde\H\|_{L^2(\Omega)}^2,
\endaligned
\end{equation}
where Lemma \ref{lemma-2.3} is used in the last inequality.
Note that
$$\rot (\A \times \B) = (\div \B) \A - (\div \A) \B + (\B\cdot \nabla)\A - (\A\cdot\nabla)\B.$$
Integrating by parts and using H\"older's inequality, we have that
\begin{equation}\label{ineq3}
\aligned
 \left|\int_\Omega \varkappa \rot \H_0 \times \widetilde \H \cdot \tilde\u~\mathrm dx\right| &= \left|\varkappa \int_\Omega \rot\H_0 \cdot (\widetilde \H \times\tilde\u)~\mathrm dx\right|\\
 &= \left|\varkappa \int_\Omega \H_0 \cdot \rot(\widetilde\H\times\tilde\u)~\mathrm dx+\varkappa \int_{\p\Omega}  (\widetilde\H\times\tilde\u)\times \mathbf{n}\cdot\H_0 ~\mathrm dx\right|\\
&= \left|\varkappa \int_\Omega \H_0 \cdot \rot(\widetilde\H\times\tilde\u)~\mathrm dx\right|\\
&= \left|\varkappa \int_\Omega \H_0 \cdot \left[(\div \tilde\u)\widetilde\H-(\div \widetilde\H)\tilde\u+(\tilde\u\cdot\nabla)\widetilde\H - (\widetilde\H\cdot\nabla)\tilde\u \right]~\mathrm dx\right|\\
&= \left|\varkappa \int_\Omega \H_0 \cdot \left[(\tilde\u\cdot\nabla)\widetilde\H - (\widetilde\H\cdot\nabla)\tilde\u \right]~\mathrm dx\right| \\
 &\leq \varkappa \|\H_0\|_{L^3(\Omega)} \left( \|\tilde\u\|_{L^6(\Omega)} \|\nabla\widetilde \H\|_{L^2(\Omega)} + \|\widetilde\H\|_{L^6(\Omega)}\|\nabla \tilde\u\|_{L^2(\Omega)} \right)\\
 &\leq C_3\varkappa \varepsilon_0 \|\mathbf q\|_{H^{1/2}(\partial\Omega)} \left( \|\nabla \tilde\u\|_{L^2(\Omega)}^2 + \|{\rot\widetilde\H}\|_{L^2(\Omega)}^2 \right),
 \endaligned
\end{equation}
where we have used $\tilde\u=\0$ on $\p\O$ in the third equality, $\div\tilde\u=\div\widetilde\H=0$ in the fifth equality, Lemma \ref{lemma-2.2} and Lemma \ref{lemma-2.3} in the last inequality. Combining inequalities \eqref{ineq0}, \eqref{ineq2} and \eqref{ineq3}, we conclude that
\begin{multline*}
  a((\tilde\u,\widetilde\H),(\tilde\u,\widetilde\H)) \geq \nu \|\nabla \tilde\u\|_{L^2(\Omega)}^2 + \nu_1 \|\rot \widetilde \H\|_{L^2(\Omega)}^2 \\
   - (C_1+C_2\varkappa+C_3\varkappa) \varepsilon_0 \left( \|\mathbf g\|_{H^{1/2}(\partial\Omega)} + \|\mathbf q\|_{H^{1/2}(\partial\Omega)} \right)\left( \|\nabla\tilde\u\|_{L^2(\Omega)}^2 + \|\rot\widetilde\H\|_{L^2(\Omega)}^2 \right).
\end{multline*}
So the bilinear functional $a$ is obviously coercive if $\varepsilon_0$ is selected small enough. The claim is then proved.

By Lax-Milgram theorem and De Rham's theorem, we obtain the existence of unique weak solution $(\tilde\u, p, \widetilde\H)$ to \eqref{MHD-wD} in $H_0^1(\div 0,\Omega) \times L^2(\Omega)/\mathbb{R} \times [H^1_{t0}(\div0,\O)\cap \mathbb{H}_D(\O)^\perp]$, with
\begin{equation*}
\|\nabla\tilde\u\|_{L^2(\Omega)} + \|\rot\widetilde\H\|_{L^2(\Omega)} \leq C(\Omega,\nu,\nu_1,\varkappa,\varepsilon_0,\mathbf{g},\mathbf{q}) \left( \|F\|_{H^{-1}(\Omega)} + \|J\|_{L^2(\Omega)} \right).
\end{equation*}
In addition, it is easy to check that
\begin{equation*}
\|F\|_{H^{-1}(\Omega)} \leq C(\Omega,\varepsilon_0) \left( \|\mathbf f\|_{H^{-1}(\Omega)} + \nu \|\mathbf g\|_{H^{1/2}(\Omega)} + \|\mathbf g\|_{H^{1/2}(\Omega)}^2 + \varkappa \|\mathbf q\|_{H^{1/2}(\Omega)}^2  \right),
\end{equation*}
and
\begin{equation*}
\|J\|_{L^2(\Omega)} \leq C(\Omega,\varepsilon_0) \left( \nu_1 \|\mathbf j\|_{L^2(\Omega)} + \nu_1 \|\mathbf q\|_{H^{1/2}(\Omega)} + \varkappa \|\mathbf g\|_{H^{1/2}(\Omega)}\|\mathbf q\|_{H^{1/2}(\Omega)}  \right).
\end{equation*}
Therefore, we get that
\begin{equation}\label{C-K}
\|\tilde\u\|_{H^1(\Omega)} + \|\widetilde \H\|_{H^1(\Omega)} \leq  C\left( \|\mathbf f\|_{H^{-1}(\Omega)} + \|\mathbf j\|_{L^2(\Omega)} + 1 + \|\mathbf g\|_{H^{1/2}(\Omega)}^2 + \|\mathbf q\|_{H^{1/2}(\Omega)}^2   \right),
\end{equation}
where $C=C(\Omega,\nu,\nu_1,\varkappa,\varepsilon_0,\mathbf{g},\mathbf{q})$.

{\it Step 2.} For any given $(\w,\D)$ above, define an operator $\mathrm{T}$ by $\mathrm{T}(\w,\D)=(\tilde\u,\widetilde\H)$.
Let $K$ be the right hand side of \eqref{C-K}. We define
$$
\aligned
\mathcal{D}=\{(\w,\D)\in H^1_0(\div0,\O)\times [H^1_{t0}(\div0,\O)\cap \mathbb{H}_D(\O)^\perp]:\|\w\|_{H^1(\O)}+\|\D\|_{H^1(\O)}\leq K\}.
\endaligned
$$
Obviously, $\mathcal{D}$ is a bounded, closed and convex subset of $H^1_0(\div0,\O)\times [H^1_{t0}(\div0,\O)\cap \mathbb{H}_D(\O)^\perp]$.
Moreover, $\mathrm{T}$ maps $\mathcal{D}$ into itself.

{\it Step 3.} We show that $\mathrm{T}$ is continuous and compact from $\mathcal{D}$ into $\mathcal{D}$. First, we prove that $\mathrm{T}$ is continuous. Assume that $(\w_k,\D_k)$, $(\w,\D)\in\mathcal{D}$ and
\begin{equation}\label{assum}
\text{$\w_k\rightarrow\w$, $\D_k\rightarrow\D$ in $H^1(\O,\mathbb{R}^3)$ as $k\rightarrow\infty$.}
\end{equation}
Let $(\tilde\u, p, \widetilde\H)$ be the weak solution of \eqref{MHD-wD} and $(\tilde\u_k, p_k, \widetilde\H_k)$ be the weak solution of \eqref{MHD-wD} with $(\w,\D)$ replaced by $(\w_k,\D_k)$. If we set
\begin{equation*}
  \mathbf v_k = \tilde\u_k -\tilde\u, \qquad \pi_k = p_k-p, \qquad \B_k=\widetilde\H_k - \widetilde\H,
\end{equation*}
it is easy to check that $( \mathbf v_k, \pi_k,\B_k)$ satisfies the following system
\begin{equation*}
\begin{cases}
-\nu \Delta \mathbf v_k + (\w_k\cdot\nabla)\tilde\u_k - (\w\cdot\nabla)\tilde\u + (\mathbf v_k\cdot\nabla)\u_0 + (\u_0\cdot\nabla)\mathbf v_k + \nabla \pi_k\\
 -\left(\varkappa\rot\widetilde\H_k \times \D_k - \varkappa \rot\widetilde\H \times \D\right) - \varkappa\rot \B_k \times \H_0 -\varkappa\rot \H_0\times \B_k=\0  &\text{ in }\Omega, \\
\rot\left(\nu_1\rot\B_k +\varkappa \D_k\times\tilde\u_k - \varkappa \D \times \tilde\u + \varkappa\B_k\times\u_0 + \varkappa \H_0\times\mathbf v_k\right) = \0  &\text{ in }\Omega, \\
\div\mathbf v_k = \div\B_k = 0 &\text{ in }\Omega,\\
\mathbf v_k =\0, \ \ \B_k\times \n = \0&\text{ on }\partial\Omega.
\end{cases}
\end{equation*}
Taking $(\mathbf v_k,\B_k)$ as a test function for the above system, and noting that
\begin{gather*}
  (\w_k\cdot\nabla)\tilde\u_k - (\w\cdot\nabla)\tilde\u = (\w_k\cdot\nabla)\mathbf v_k + [(\w_k-\w)\cdot\nabla]\tilde\u,\\
  \rot\widetilde\H_k \times\D_k -  \rot\widetilde\H \times \D = \rot \B_k \times \D_k + \rot \widetilde\H \times (\D_k-\D),\\
  \D_k\times\tilde\u_k -  \D \times \tilde\u = \D_k \times\mathbf v_k + (\D_k-\D)\times\tilde\u,
\end{gather*}
we obtain that
\begin{equation*}
\aligned
  &\int_\Omega \left[\nu |\nabla\mathbf v_k|^2 + \nu_1 |\rot\B_k|^2 + (\mathbf v_k\cdot\nabla)\u_0 \cdot\mathbf v_k - \varkappa \rot \B_k \times \H_0 \cdot\mathbf v_k \right]~\mathrm dx\\
  &+ \int_\Omega [ - \varkappa \rot \H_0 \times \B_k \cdot \mathbf v_k+\varkappa \B_k \times \u_0 \cdot \rot B_k  + \varkappa \H_0\times \mathbf v_k \cdot \rot\B_k]~\mathrm dx\\
  &= \int_\Omega \{\varkappa \rot\widetilde \H \times (\D_k-\D)\cdot\mathbf v_k - [(\w_k-\w)\cdot\nabla]\tilde\u\cdot\mathbf v_k - \varkappa (\D_k-\D )\times \tilde\u \cdot\rot\B_k\}~\mathrm dx.
\endaligned
\end{equation*}
By a similar procedure as in Step 1 and using H\"older's inequality, we can get that
\begin{equation*}
\aligned
  &C(\Omega,\nu,\nu_1,\varkappa,\varepsilon_0,\mathbf{g},\mathbf{q})  \left( \|\nabla\mathbf v_k\|_{L^2(\Omega)}^2 + \|\rot\B_k\|_{L^2(\Omega)}^2 \right)\\
  &\leq \varkappa\|\rot\widetilde \H\|_{L^2(\Omega)} \|\D_k-\D\|_{L^3(\Omega)} \|\mathbf v_k\|_{L^6(\Omega)} + \|\w_k -\w \|_{L^3(\Omega)} \|\nabla \tilde \u\|_{L^2(\Omega)}\|\mathbf v_k\|_{L^6(\Omega)} \\
 &\quad+\varkappa\|\D_k-\D\|_{L^3(\Omega)} \|\tilde\u\|_{L^6(\Omega)}\|\rot\B_k\|_{L^2(\Omega)}.
  \endaligned
\end{equation*}
Hence it follows that
$$
\aligned
&\|\nabla\mathbf v_k\|_{L^2(\Omega)} + \|\rot\B_k\|_{L^2(\Omega)}\\
&\leq C(\|\rot\widetilde \H\|_{L^2(\Omega)} \|\D_k-\D\|_{L^3(\Omega)} + \|\w_k -\w \|_{L^3(\Omega)} \|\nabla \tilde \u\|_{L^2(\Omega)}+\|\D_k-\D\|_{L^3(\Omega)} \|\tilde\u\|_{L^6(\Omega)}),
\endaligned
$$
where the constant $C$ depends on $\Omega,\nu,\nu_1,\varkappa,\varepsilon_0,\mathbf{g},\mathbf{q}$.
Together with \eqref{C-K} and \eqref{assum}, we conclude by Lemma \ref{lemma-2.3} that
\begin{equation*}
\tilde\u_k \to \tilde\u\quad \text{and} \quad \widetilde\H_k \to \widetilde\H \quad \text{in }H^1(\Omega,\mathbb R^3)\text{ as } k\rightarrow\infty,
\end{equation*}
which implies the continuity of the operator $\rm T$.

And then we show that $\mathrm{T}$ is compact from $\mathcal{D}$ into $\mathcal{D}$. Assume that $(\w_k,\D_k)\in\mathcal{D}$. Then there exist $(\w,\D)\in\mathcal{D}$ and a subsequence of $\{(\w_k,\D_k)\}$, still denoted by $\{(\w_k,\D_k)\}$ to simplify the notation, satisfying
$$\w_k\rightharpoonup\w,\; \D_k\rightharpoonup\D\text{ in } H^1(\O,\mathbb{R}^3) \text{ and } \w_k\rightarrow\w,\;\D_k\rightarrow\D \text{ in } L^3(\O,\mathbb{R}^3) \text{ as } k\rightarrow\infty.$$
Similarly to the proof of continuity of $\mathrm{T}$, we obtain
\begin{equation*}
\tilde\u_k \to \tilde\u\quad \text{and} \quad \widetilde\H_k \to \widetilde\H \quad \text{in }H^1(\Omega,\mathbb R^3)\text{ as } k\rightarrow\infty.
\end{equation*}

{\it Step 4.} Finally, we use Schauder's fixed point theorem and conclude that $\mathrm{T}$ has a fixed point $(\tilde\u,\widetilde\H)\in \mathcal{D}$. Then by De Rham's
theorem there exists a function $p\in L^2(\O)/\R$ such that $(\tilde\u,p,\widetilde\H)$ be a weak solution of \eqref{MHD-wD} with $(\w,\D)$ replaced by $(\tilde\u,\widetilde\H)$. So we get a weak solution of \eqref{MHD3}.
\end{proof}

\bigskip

\subsection*{Acknowledgements.}
The first author would like to thank Dr. Yong Zeng for bringing the reference \cite{Alekseev04} in 2016. This work does not have any conflicts of interest. Zhang was supported by the National Natural Science Foundation of China Grant No. 11901003 and Anhui Provincial Natural Science Foundation Grant No. 1908085QA28.
Zhao was supported by the National Natural Science Foundation of China Grant No. 11971169.

\begin {thebibliography}{DUMA}

\bibitem{Alekseev04}
 G. V. Alekseev, {\it Solvability of control problems for stationary equations of the magnetohydrodynamics of a viscous fluid}, (Russian) Sibirsk. Mat. Zh. {\bf 45} (2) (2004), 243-263; translation in Siberian Math. J. {\bf 45} (2) (2004), 197-213.

\bibitem{Alekseev}
G. V. Alekseev, {\it Control problems for stationary equations of magnetohydrodynamics}, (Russian) Dokl. Akad. Nauk 395 (3) (2004), 322-325.

\bibitem{Alekseev16}
G. V. Alekseev, {\it Solvability of an inhomogeneous boundary value problem for the stationary magnetohydrodynamic equations for a viscous incompressible fluid}, Translation of Differ. Uravn. {\bf 52} (6) (2016), 760-769; Differ. Equ. {\bf 52} (6) (2016), 739-748.

\bibitem{AB20}
G. V. Alekseev, R. V. Brizitskii, {\it Boundary Control Problems for the Stationary Magnetic Hydrodynamic Equations in the Domain with Non-Ideal Boundary}, J. Dyn. Control Syst. (2020), online. https://doi.org/10.1007/s10883-019-09474-1

\bibitem{DL1990} R. Dautray, J. Lions, {\it Mathematical Analysis and Numerical Methods for Science and Technology}, vol. {\bf 3}, Springer-Verlag, New York, 1990.

\bibitem{GMP1991}
M. D. Gunzburger, A. J. Meir, J. S. Peterson, {\it On the existence, uniqueness, and finite element approximation of solutions of the equations of stationary, incompressible magnetohydrodynamics}, Math. Comp. {\bf 56} (194) (1991), 523-563.

\bibitem{LS1960}
O. A. Ladyzhenskaja, V. A. Solonnikov, {\it Solution of some non-stationary problems of magnetohydrodynamics for a viscous incompressible fluid}, (Russian) Trudy Mat. Inst. Steklov {\bf 59}  (1960), 115-173.


\bibitem{ST1983}
M. Sermange, R. Temam, {\it Some mathematical questions related to the MHD equations}, Comm. Pure Appl. Math. {\bf 36} (5) (1983), 635-664.

\bibitem{S1960}
V. A. Solonnikov, {\it Some stationary boundary-value problems of magnetohydrodynamics}, (Russian) Trudy Mat. Inst. Steklov {\bf 59}  (1960), 174-187.


\end{thebibliography}

\end{document}